\newtheorem{theorem}{\sc Theorem}[section]
\newtheorem{thm}[theorem]{\sc Theorem}
\newtheorem{lem}[theorem]{\sc Lemma}
\newtheorem{prop}[theorem]{\sc Proposition}
\newtheorem{cor}[theorem]{\sc Corollary}
\newtheorem{ex}[theorem]{\sc Example}
 \DeclareMathOperator{\PSL}{PSL}
 \DeclareMathOperator{\FC}{FC}
 \DeclareMathOperator{\ASL}{ASL}
 \DeclareMathOperator{\SL}{SL}
 \DeclareMathOperator{\Aut}{Aut} 
 \DeclareMathOperator{\Tor}{Tor}  
 \DeclareMathOperator{\cent}{Z}
\title[Automorphism Orbits]{Virtually nilpotent groups with finitely many orbits under automorphisms}
\author[Bastos]{Raimundo  Bastos}
\address{Departamento de Matem\'atica, Campus Universit\'{a}rio Darcy Ribeiro,  Universidade de Bras\'ilia,
Brasilia-DF, 70910-900 Brazil}
\author[Dantas]{Alex C. Dantas}
\author[de Melo]{Emerson de Melo}
\email{(Bastos) bastos@mat.unb.br; (Dantas) alexdantas@unb.br; (de Melo) emerson@mat.unb.br}
\subjclass[2010]{20E22; 20E36.}
\keywords{Extensions; Automorphisms; Soluble groups}
\thanks{The authors were supported by DPI/UnB and FAPDF-Brazil.}
\begin{document}
\maketitle

\begin{abstract}
Let $G$ be a group. The orbits of the natural action of $\Aut(G)$ on $G$ are called ``automorphism orbits'' of $G$, and the number of automorphism orbits of $G$ is denoted by $\omega(G)$. Let $G$ be a virtually nilpotent group such that $\omega(G)< \infty$.  We prove that $G = K \rtimes H$ where $H$ is a torsion subgroup and $K$ is a torsion-free nilpotent radicable characteristic subgroup of $G$. Moreover, we prove that $G^{'}= D \times \Tor(G^{'})$ where $D$ is a torsion-free nilpotent radicable characteristic subgroup.  In particular, if the maximum normal torsion subgroup $\tau(G)$ of  $G$ is trivial, then $G^{'}$ is nilpotent.
\end{abstract}

\maketitle

\section{Introduction}

Let $G$ be a group. The orbits of the natural action of $\Aut(G)$ on $G$ are called ``automorphism orbits'' of $G$, and the number of automorphism orbits of $G$ is denoted by $\omega(G)$. Observe that automorphism orbits are unions of conjugacy classes and hence they give an example of \textit{fusion} in the holomorph group $G \rtimes \Aut(G)$, a well-known concept established in the literature (see for instance \cite[Chapter 7]{gorenstein}). It is interesting to ask what can we
say about ``$G$'' only knowing $\omega(G)$. It is obvious that $\omega(G)=1$ if and only if $G=\{1\}$, and it is well-known that if $G$ is a finite group then $\omega(G) = 2$ if and only if $G$ is elementary abelian. In \cite{LM}, T.\,J. Laffey and D. MacHale proved that if $G$ is a finite non-soluble group with $\omega(G) \leqslant 4$, then $G$ is isomorphic to $\PSL(2,\mathbb{F}_4)$. M. Stroppel in \cite{S1} has shown that the only finite nonabelian simple groups $G$ with $\omega(G) \leqslant 5$ are the groups $\PSL(2,\mathbb{F}_q)$ with $q \in \{4,7,8,9\}$ (see also \cite{K04} for finite simple groups with $\omega(G) \leqslant 17$). In \cite{BDG}, it was proved that if $G$ is a finite non-soluble group with $\omega(G) \leqslant 6$, then $G$ is isomorphic to one of $\PSL(2,\mathbb{F}_q)$ with $q \in \{4,7,8,9\}$, $\PSL(3,\mathbb{F}_4)$ or $\ASL(2,\mathbb{F}_4)$ (answering a question of M. Stroppel \cite[Problem 2.5]{S1}). Here $\ASL(2,\mathbb{F}_4)$ is the affine group $\mathbb{F}_4^2 \rtimes \SL(2,\mathbb{F}_4)$ where $\SL(2,\mathbb{F}_4)$ acts naturally on $\mathbb{F}_4^2$. For more details concerning automorphism orbits of finite groups see \cite{BDG,Bors,K04,LM,Z}.

Some aspects of automorphism orbits are also investigated for infinite groups. In \cite{MS1}, H. M\"aurer and M. Stroppel classified the groups with a nontrivial characteristic subgroup and with three orbits by automorphisms (see also \cite[Theorem B]{BDdM}). M. Schwachh\"ofer and M. Stroppel have shown that if $G$ is an abelian group with finitely many automorphism orbits, then $G = D \oplus \Tor(G)$, where $D$ is a torsion-free divisible characteristic subgroup of $G$ and $\Tor(G)$ is the set of all torsion elements in $G$ \cite[Lemma 1.1]{S2}. In \cite{BD18}, it was proved that if $G$ is a $\FC$-group with finitely many automorphism orbits, then the derived subgroup $G'$ is finite and $G$ admits a decomposition $G = A \times \Tor(G)$, where $A$ is a divisible characteristic subgroup of $\cent(G)$ (cf. \cite[Theorem A]{BD18}). 

The semidirect product allows us to present interesting examples of groups with finitely many automorphism orbits (see Section \ref{sec:examples}, below). The next result provides a criterion to a semidirect product $G = A \rtimes B$ has only finitely many automorphism orbits, where $A$ is an abelian group and $B$ is a finite group. 

\begin{thm}\label{pro1}
Let $A$ be an abelian group and $B$ a finite subgroup of $\Aut(A)$.  Let $G=A \rtimes B$ be the semidirect product of $A$ and $B$ and assume that $A$ is a characteristic subgroup of $G$. Then $\omega(G)< \infty$ if and only if $A$ has finite automorphism orbits under the action of $C_{\Aut(A)}(B)$.
\end{thm}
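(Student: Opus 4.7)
The proof has two directions, and both proceed via the restriction map $\rho \colon \Aut(G) \to \Aut(A)$, which is well-defined because $A$ is characteristic in $G$. Throughout I identify $B$ with its image in $\Aut(A)$ coming from conjugation in the semidirect product, so that $B$, $C_{\Aut(A)}(B)$ and $N_{\Aut(A)}(B)$ all live inside $\Aut(A)$.

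For the ``if'' direction, I would construct an explicit copy of $C_{\Aut(A)}(B)$ inside $\Aut(G)$: for each $\alpha \in C_{\Aut(A)}(B)$ define $\tilde\alpha \colon G \to G$ by $\tilde\alpha(ab) = \alpha(a)\, b$ with $a \in A$, $b \in B$. A direct computation on products shows $\tilde\alpha$ is a homomorphism precisely because $\alpha$ commutes with each element of $B$, and it is visibly bijective. Let $\widetilde{C} := \{\tilde\alpha : \alpha \in C_{\Aut(A)}(B)\} \leq \Aut(G)$. Since $\tilde\alpha$ preserves each coset $Ab$ and acts on it as $\alpha$ under the bijection $a \leftrightarrow ab$, the $\widetilde{C}$-orbits on $G$ are in bijection with pairs (coset of $A$, $C_{\Aut(A)}(B)$-orbit on $A$). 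Assuming $C_{\Aut(A)}(B)$ has $n < \infty$ orbits on $A$, this produces $n \cdot |B| < \infty$ orbits for $\widetilde{C}$, and hence at most that many for the larger group $\Aut(G)$.

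For the ``only if'' direction, I would first establish $B \trianglelefteq \rho(\Aut(G)) \leq N_{\Aut(A)}(B)$. For $\varphi \in \Aut(G)$ and $b \in B$, write $\varphi(b) = a'b'$ with $a' \in A$, $b' \in B$; computing $\varphi(bab^{-1})$ in two ways for arbitrary $a \in A$ and using that $A$ is abelian (so $a'$ commutes with the element $b'(\rho(\varphi)(a)) \in A$) yields $\rho(\varphi)\, b\, \rho(\varphi)^{-1} = b' \in B$. Conversely, the inner automorphism of $G$ by $b_0 \in B$ restricts on $A$ to the embedded copy of $b_0$, so $B \leq \rho(\Aut(G))$. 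The conjugation action of $\rho(\Aut(G))$ on the finite group $B$ therefore produces a homomorphism into the finite group $\Aut(B)$ with kernel $H := \rho(\Aut(G)) \cap C_{\Aut(A)}(B)$, so $[\rho(\Aut(G)) : H] \leq |\Aut(B)| < \infty$. If $\omega(G) < \infty$, then $\rho(\Aut(G))$ has finitely many orbits on $A$; a standard double-coset argument shows that a subgroup of finite index splits each orbit into only finitely many pieces, so $H$ has finitely many orbits on $A$, and \emph{a fortiori} so does the larger group $C_{\Aut(A)}(B) \supseteq H$. The only step that requires genuine care is the normalizer computation identifying $\rho(\Aut(G))$ inside $N_{\Aut(A)}(B)$; everything else is orbit bookkeeping driven by the finiteness of $B$.
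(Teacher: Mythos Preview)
Your argument is correct and follows the same overall strategy as the paper: for the ``if'' direction both you and the paper extend each $\alpha\in C_{\Aut(A)}(B)$ to an automorphism of $G$ by $ab\mapsto \alpha(a)b$ and bound $\omega(G)$ by $|B|\cdot\omega_{C_{\Aut(A)}(B)}(A)$; for the ``only if'' direction both pass to a finite-index piece of $\Aut(G)$ whose action on $A$ lands in $C_{\Aut(A)}(B)$.

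The organizational difference in the ``only if'' direction is worth noting. The paper works upstairs in $\Aut(G)$: it takes the finite-index subgroup $X\leqslant\Aut(G)$ acting trivially on the finite quotient $G/A$, and then, for $\alpha\in X$, builds a corrected automorphism $\bar\alpha$ of $G$ fixing $B$ pointwise with $\bar\alpha|_A=\alpha|_A$; the computation $(a^b)^{\bar\alpha}=(a^{\bar\alpha})^b$ then shows $\alpha|_A\in C_{\Aut(A)}(B)$. You instead work downstairs in $\Aut(A)$ via the restriction $\rho$, prove $\rho(\Aut(G))\leqslant N_{\Aut(A)}(B)$ by a two-ways calculation of $\varphi(bab^{-1})$, and then use the finite target $\Aut(B)$ to get the finite-index centralizing subgroup $H$. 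Your route avoids the auxiliary automorphism $\bar\alpha$ at the cost of the normalizer verification; the paper's route avoids the normalizer step but needs the ``correction'' trick. Both exploit exactly the same finiteness (that of $B$) and yield the same conclusion.
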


We recall that a group possesses a certain property virtually if it has a subgroup of finite index with that property. In \cite{BDdM}, the authors prove that if $G$ is a soluble group of finite rank with $\omega(G)< \infty$, then $G$ contains a torsion-free characteristic nilpotent subgroup $K$ such that $G = K \rtimes H$, where $H$ is a finite group (cf. \cite[Theorem A]{BDdM}). The next result can be viewed as a generalization of the above mentioned results from \cite{BD18}, \cite{BDdM} and \cite{S2}. 

\begin{thm}\label{thm:linear}
Let $G$ be a virtually nilpotent group with $\omega(G)<\infty$. Then $G$ has a torsion-free radicable nilpotent subgroup $K$ and a torsion subgroup $H$ such that $G=K \rtimes H$. Moreover, the derived subgroup $G^{'}= D \times \Tor(G^{'})$, where $D$ is a torsion-free nilpotent radicable characteristic subgroup.
\end{thm}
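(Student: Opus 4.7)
The plan is to reduce the statement to the decomposition theorems already in place for abelian groups (\cite[Lemma~1.1]{S2}) and for soluble groups of finite rank (\cite[Theorem~A]{BDdM}), by extracting a characteristic torsion-free radicable nilpotent subgroup $K$ with torsion quotient, then splitting the extension, and finally analysing $G^{'}$. Fix a nilpotent normal subgroup $N$ of finite index in $G$. Since $\omega(G)<\infty$, the set of element orders of $G$ is finite; in particular every torsion element has bounded order and every $\Aut(G)$-invariant subgroup has at most $\omega(G)$ many $\Aut(G)$-orbits. The torsion subgroup $T:=\Tor(N)$ is characteristic in $N$ and therefore normal in $G$, and $N/T$ is torsion-free nilpotent.

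The first step is to build $K$ inductively along the upper central series of $N/T$: at each stage the next graded piece is a torsion-free abelian $\Aut(G)$-invariant quotient, to which I would apply \cite[Lemma~1.1]{S2} to obtain a torsion-free divisible (radicable) characteristic direct summand. Lifting these summands one by one produces a torsion-free radicable nilpotent subgroup $K\leqslant G$ that is characteristic in $G$ and for which $G/K$ is torsion; the key observation is that any infinite-order element must be divisible to arbitrary heights modulo the torsion obstruction at each layer (by \cite[Lemma~1.1]{S2} applied layer by layer), forcing it into $K$.

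With $K$ in place, I would produce the torsion complement $H$ via a Schur--Zassenhaus-type argument adapted to radicable subgroups: the radicability and nilpotence of $K$ make its second cohomology against the torsion quotient $G/K$ vanish, so the extension $1\to K\to G\to G/K\to 1$ splits and yields $G=K\rtimes H$ with $H$ torsion. For the assertion about $G^{'}$: since $G^{'}$ is characteristic in $G$ and inherits $\omega|_{G^{'}}<\infty$ under the restricted action of $\Aut(G)$, the very same construction applied to $G^{'}$ isolates a torsion-free radicable nilpotent characteristic subgroup $D\leqslant G^{'}$; radicability of $D$ against the torsion quotient $G^{'}/D$ then promotes this to a direct product $G^{'}=D\times\Tor(G^{'})$.

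The main technical obstacle is the inductive extraction of the characteristic subgroup $K$: at each step one must lift a Schwachh\"ofer--Stroppel decomposition for a torsion-free abelian quotient compatibly across a central extension, while retaining the property of being characteristic in the whole of $G$ (not merely in $N$), and ensuring that the torsion complements close up when all layers are combined. Once $K$ is identified and shown to contain every infinite-order element, both the splitting $G=K\rtimes H$ and the direct product $G^{'}=D\times\Tor(G^{'})$ follow from the cohomological injectivity of radicable nilpotent groups against torsion quotients.
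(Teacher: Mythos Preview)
Your plan for the first assertion (constructing $K$ and splitting $G=K\rtimes H$) is essentially what the paper does: the paper packages the inductive layer-by-layer extraction of $K$ as Proposition~\ref{lem.nilpotent} (giving $N=K\times\Tor(N)$ for nilpotent $N$) and then invokes Proposition~\ref{lem.soluble.sub} from \cite{BDdM} for the splitting, rather than a direct cohomological argument; but the underlying ideas match.

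The genuine gap is in your treatment of $G'$. Your claim that ``radicability of $D$ against the torsion quotient $G'/D$ then promotes this to a direct product $G'=D\times\Tor(G')$'' is not correct: radicability of a normal nilpotent subgroup yields a \emph{splitting} of the extension, not centrality of the complement. For instance, $\mathbb{Q}\rtimes C_2$ (with $C_2$ acting by inversion) has a torsion-free radicable normal subgroup with torsion quotient, yet is not a direct product. Applying your construction recursively to $G'$ therefore only recovers a semidirect decomposition $G'=D\rtimes(\text{torsion})$, and you have given no reason why the torsion complement should act trivially on $D$.

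The paper supplies exactly this missing ingredient, and it is the real content of the theorem. One shows that for each central factor $V_i=K_i/K_{i+1}$ of $K$ (a $\mathbb{Q}$-vector space), the actions of $H$ and of a finite-index subgroup $X\leqslant\Aut(G)$ commute; then Lemma~\ref{abel} (if a finite group $B$ acts on a $\mathbb{Q}$-space $V$ with $\omega_{C_{\Aut(V)}(B)}(V)<\infty$, then $B$ is abelian) forces the image of $H$ in $\Aut(V_i)$ to be abelian, so $H'$ stabilises the central series of $K$. Since $K$ is torsion-free, Lemma~\ref{stab} gives $[K,H']=1$, whence $H'\leqslant\tau(G)$ and $G'\leqslant K\times H'$. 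This representation-theoretic step is what turns the semidirect product into a direct one, and it is absent from your outline.
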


We denote by $\tau(G)$ the maximum normal torsion subgroup of a group $G$. Note that in Theorem \ref{thm:linear} if $\tau(G)=1$, then the derived subgroup $G'$ is nilpotent. On the other hand, $H$ need not be soluble, for instance, if $H$ is any finite group and $D$ is a torsion-free divisible abelian subgroup, then the group $G = H \times D$ has finitely many orbits under automorphisms (cf. \cite[Lemma 1.1(4)]{S2}).

Now, the following result is an immediate consequence of \cite[Theorem A]{BDdM} and Theorem \ref{thm:linear}. 

\begin{cor} \label{cor.finite.rank}
Let $G$ be a virtually soluble group of finite rank with $\omega(G)< \infty$. Then $G$ has a torsion-free radicable nilpotent subgroup $K$ and a finite subgroup $H$ such that $G=K \rtimes H$. Moreover, the derived subgroup $G^{'}= D \times \Tor(G^{'})$, where $D$ is a torsion-free nilpotent radicable characteristic subgroup.
\end{cor}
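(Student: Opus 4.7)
The plan is to reduce the virtually soluble finite-rank setting to the virtually nilpotent setting of Theorem \ref{thm:linear} via \cite[Theorem A]{BDdM}, then to use the finite-rank hypothesis to force the torsion complement to be finite. Let $R$ denote the soluble radical of $G$ (the maximum normal soluble subgroup); since $G$ is virtually soluble, $R$ has finite index in $G$ and is characteristic by maximality. It inherits finite rank from $G$, and because $\Aut(G)$ acts on $R$, every $\Aut(G)$-orbit contained in $R$ is a union of $\Aut(R)$-orbits, giving $\omega(R) \leq \omega(G) < \infty$. Applying \cite[Theorem A]{BDdM} to $R$ yields $R = K_0 \rtimes H_0$ with $K_0$ torsion-free nilpotent (characteristic in $R$) and $H_0$ finite. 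Then $K_0$ is characteristic in $G$ as well, and $[G:K_0] = [G:R]\cdot[R:K_0] < \infty$, so $G$ is virtually nilpotent. Theorem \ref{thm:linear} now produces a decomposition $G = K \rtimes H$ with $K$ torsion-free radicable nilpotent and $H$ torsion, and also gives $G' = D \times \Tor(G')$ as claimed.

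It remains to upgrade $H$ from torsion to finite. Since $K_0$ has finite index in $G$, the subgroup $K \cap K_0$ has finite index in $K$. A radicable group has no proper subgroups of finite index (on any finite quotient the $n$-th power map is simultaneously surjective and trivial), so $K \subseteq K_0$. Hence $K_0/K$ embeds in $H = G/K$ as a subgroup of finite index. Now $K$, being a radicable subgroup of the torsion-free nilpotent group $K_0$, is isolated in $K_0$: if $x^n \in K$ with $x \in K_0$, then $x^n = y^n$ for some $y \in K$ by radicability, and uniqueness of roots in torsion-free nilpotent groups forces $x = y \in K$. Therefore $K_0/K$ is torsion-free. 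Combined with $K_0/K \leq H$ being torsion, this forces $K_0/K = 1$, so $K = K_0$ and $H \cong G/K_0$ is finite.

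The main subtlety is the last paragraph, where two properties of the radicable subgroup $K$ must be exploited in sequence: radicability, to embed $K$ into the characteristic finite-index subgroup $K_0$ supplied by \cite[Theorem A]{BDdM}, and then the isolation property of radicable subgroups in torsion-free nilpotent groups, to collapse the torsion-free quotient $K_0/K$.
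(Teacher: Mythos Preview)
Your overall strategy matches the paper's: reduce to the virtually nilpotent case via \cite[Theorem A]{BDdM} applied to a characteristic soluble finite-index subgroup, then invoke Theorem~\ref{thm:linear}. The paper states the corollary as an ``immediate consequence'' and gives no further detail, so your write-up actually supplies the missing argument for why $H$ is \emph{finite} rather than merely torsion; the two-step use of radicability (first to force $K\subseteq K_0$, then to show $K$ is isolated in $K_0$) is a clean way to close this gap.

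One genuine slip to fix: your justification of $\omega(R)\leq\omega(G)$ is stated backwards. You write that ``every $\Aut(G)$-orbit contained in $R$ is a union of $\Aut(R)$-orbits,'' which would give the inequality in the wrong direction. The correct reasoning is that restriction gives a homomorphism $\Aut(G)\to\Aut(R)$, so the image of $\Aut(G)$ is a subgroup of $\Aut(R)$; hence each $\Aut(R)$-orbit on $R$ is a union of $\Aut(G)$-orbits, and therefore $\omega(R)\leq\omega(G)$. The conclusion you use is right; only the sentence explaining it needs to be reversed.

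A minor remark: invoking ``the soluble radical'' of an infinite group requires a word of care, since the join of all soluble normal subgroups need not be soluble in general. In your situation it is harmless---$G$ has a soluble normal subgroup $N$ of finite index (the core of a finite-index soluble subgroup), so derived lengths of soluble normal subgroups are uniformly bounded and the radical is genuinely soluble, characteristic, and of finite index. You might add a clause to this effect, or simply bypass the issue by working directly with the characteristic core $\bigcap_{\alpha\in\Aut(G)} N^{\alpha}$, which has finite index because $\omega(G/N)<\infty$ forces $N$ to have only finitely many $\Aut(G)$-images.
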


As mentioned before, if $A$ is a torsion-free abelian group with $\omega(A)<\infty$, then $A$ is a divisible abelian group. In particular, $A$ can be considered as a vector space over $\mathbb{Q}$. Using Maschke's theorem and Theorems \ref{pro1} and \ref{thm:linear} (see also Lemma \ref{abel}, below) we obtain the following interesting corollary.

\begin{cor}\label{pro2}
Let $A$ be a finite dimensional vector space over $\mathbb{Q}$ and $B$ a finite subgroup of $\Aut(A)$. Let $G=A \rtimes B$ be the semidirect product of $A$ and $B$. Then $\omega(G)< \infty$ if and only if $B$ is abelian. \end{cor}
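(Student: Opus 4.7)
The plan is to reduce to Theorem~\ref{pro1} and then analyse the orbits of $C_{\Aut(A)}(B)$ on $A$ via the Wedderburn decomposition afforded by Maschke's theorem. I first verify that $A$ is characteristic in $G = A \rtimes B$: any torsion-free normal subgroup of $G$ projects to a torsion-free subgroup of the finite quotient $G/A \cong B$, which must be trivial, so such a subgroup lies in $A$. Thus $A$ is the unique maximal torsion-free normal subgroup of $G$, hence characteristic, and Theorem~\ref{pro1} reduces $\omega(G) < \infty$ to the finiteness of the number of orbits of $C_{\Aut(A)}(B)$ on $A$.

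Since $|B|$ is invertible in $\mathbb{Q}$, Maschke's theorem decomposes $A$ as a $\mathbb{Q}B$-module, $A = \bigoplus_{i=1}^{s} V_i^{n_i}$, with the $V_i$ pairwise non-isomorphic simple summands. Setting $D_i = \operatorname{End}_{\mathbb{Q}B}(V_i)$, Schur's lemma makes each $D_i$ a finite-dimensional $\mathbb{Q}$-division algebra and identifies
\[
C_{\Aut(A)}(B) \;\cong\; \prod_{i=1}^{s} \GL_{n_i}(D_i),
\]
acting block-diagonally on $A$.

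For the implication $(\Leftarrow)$, assume $B$ is abelian. Then $\mathbb{Q}B$ is commutative, so by Wedderburn each $D_i$ is a field and $V_i \cong D_i$ as a $\mathbb{Q}B$-module. In particular $\GL_{n_i}(D_i)$ acts transitively on $D_i^{n_i} \setminus \{0\}$, producing exactly $2^{s}$ orbits on $A$, and Theorem~\ref{pro1} gives $\omega(G) < \infty$. For the converse $(\Rightarrow)$, assume $\omega(G) < \infty$; by Theorem~\ref{pro1}, $C_{\Aut(A)}(B)$ has finitely many orbits on $A$. A first observation is that each $V_i$ must be one-dimensional over $D_i$: otherwise the $\GL_{n_i}(D_i)$-orbits on $V_i^{n_i}$ would be parameterised by a Grassmannian over the infinite division ring $D_i$, which is infinite. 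Combining this with Lemma~\ref{abel}, which supplies the missing ingredient excluding non-commutative $D_i$, we conclude $\mathbb{Q}B \cong \prod_i D_i$ is commutative, so $B$ is abelian.

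The main obstacle is the last step of $(\Rightarrow)$: the finiteness of $C_{\Aut(A)}(B)$-orbits alone only yields $\dim_{D_i} V_i = 1$ and does not by itself force the $D_i$ to be commutative. Lemma~\ref{abel} is precisely what supplies this additional constraint; once in place, the Wedderburn factors of $\mathbb{Q}B$ collapse to a product of fields and commutativity of $B$ follows.
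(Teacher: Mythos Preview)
Your verification that $A$ is characteristic contains an error: the image of a torsion-free subgroup under a quotient map need not be torsion-free (for instance $\mathbb{Z}\twoheadrightarrow\mathbb{Z}/2\mathbb{Z}$), so you cannot conclude that a torsion-free normal $N$ projects trivially into $G/A$. A correct argument: $A\cong\mathbb{Q}^n$ is divisible, hence has no proper subgroup of finite index; since $A$ itself has finite index in $G$, it equals the intersection of all finite-index subgroups of $G$ and is therefore characteristic. With this repaired, Theorem~\ref{pro1} applies as you intend.

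For $(\Rightarrow)$ your detour through the Wedderburn factors is superfluous and slightly off. Lemma~\ref{abel} already yields $B$ abelian directly from $\omega_C(A)<\infty$; once you invoke it there is nothing left to prove, so the Grassmannian step and the conclusion ``$\mathbb{Q}B\cong\prod_i D_i$ is commutative, hence $B$ abelian'' are redundant. (The isomorphism $\mathbb{Q}B\cong\prod_i D_i$ would moreover require every simple $\mathbb{Q}B$-module to occur in $A$, which faithfulness of $B$ on $A$ does not by itself guarantee.) The paper's argument for this direction is exactly Theorem~\ref{pro1} followed by Lemma~\ref{abel}, nothing more.

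For $(\Leftarrow)$ your approach is genuinely different from the paper's and arguably cleaner. The paper decomposes $A=V_1\oplus\cdots\oplus V_t$ via Maschke, uses Lemma~\ref{represAbel} to see that $B$ acts on each irreducible $V_j$ through a cyclic quotient generated by some $x$, and then constructs by hand an element of $C_{\Aut(A)}(B)$ sending a given nonzero $v\in V_j$ to a given nonzero $w\in V_j$, exploiting that $\{v,v^x,\ldots,v^{x^{d-1}}\}$ and $\{w,w^x,\ldots,w^{x^{d-1}}\}$ are both bases of $V_j$. Your argument replaces this explicit construction with the structural observation that, for $B$ abelian, each $D_i$ is a field with $V_i\cong D_i$, so $C_{\Aut(A)}(B)\cong\prod_i\GL_{n_i}(D_i)$ acts with exactly $2^s$ orbits on $A$. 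Both reach the same conclusion; yours is more conceptual, the paper's more hands-on.
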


It is worth to mention that in Corollary \ref{pro2} each element of $C_{\Aut(A)}(B)$ induces automorphisms of $A$ as $\mathbb{Q}B$-module. Then, Theorem \ref{pro1} says that the semidirect product $A \rtimes B$ has finitely many automorphism orbits if and only if $A$ has finitely many automorphism orbits as $\mathbb{Q}B$-module.  

\section{Preliminaries and Examples}  \label{sec:examples}

As usual, if $A$ is a group and $B$ is a subgroup of $\Aut(A)$, we denote by $C_{\Aut(A)}(B)$ the subgroup $\{c\in \Aut(A)  \mid  b^c=b, \  \forall b\in B \}$. We write $\omega_{C}(G)$ for the number of automorphism orbits of $G$ under the action of a subgroup $C$ of $\Aut(G)$. In what follows, $V$ is a vector space over $\mathbb{Q}$, $B$ is a finite group of $\Aut(V)$ and $C=C_{\Aut(V)}(B)$. We consider $V$ as a $\mathbb{Q}BC$-module. 

\begin{lem} \label{rem}
If $\omega_{C}(V) = n$, then $\omega_{C^{s}}(V^{s}) = n^{s}$. 
\end{lem}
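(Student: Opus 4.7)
The plan is to exploit the product structure of the action directly. The group $C^s$ acts on $V^s$ componentwise: for $(c_1,\ldots,c_s) \in C^s$ and $(v_1,\ldots,v_s) \in V^s$, we set
\[
(c_1,\ldots,c_s) \cdot (v_1,\ldots,v_s) \;=\; (c_1 v_1,\, c_2 v_2,\, \ldots,\, c_s v_s).
\]
First I would verify that this is a well-defined linear action and record the elementary observation that two tuples $(v_1,\ldots,v_s)$ and $(w_1,\ldots,w_s)$ lie in the same $C^s$-orbit if and only if, for every $i = 1,\ldots,s$, the elements $v_i$ and $w_i$ lie in the same $C$-orbit of $V$. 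The ``if'' direction is immediate by choosing, for each coordinate $i$, an element $c_i \in C$ with $c_i v_i = w_i$; the ``only if'' direction is forced because the action is coordinatewise.

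From this I would conclude the counting statement. Let $O_1,\ldots,O_n$ enumerate the distinct $C$-orbits of $V$. The observation above says that the $C^s$-orbits of $V^s$ are precisely the Cartesian products
\[
O_{i_1} \times O_{i_2} \times \cdots \times O_{i_s}, \qquad (i_1,\ldots,i_s)\in \{1,\ldots,n\}^s,
\]
and distinct index tuples produce disjoint subsets whose union exhausts $V^s$. Hence there are exactly $n^s$ orbits, i.e.\ $\omega_{C^s}(V^s) = n^s$.

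I do not anticipate a substantive obstacle: the argument is the standard fact that orbits of a direct product action on a direct product set are Cartesian products of orbits of the factor actions, and the hypothesis that $V$ is a $\mathbb{Q}$-vector space and $C$ a subgroup of $\Aut(V)$ plays no role beyond guaranteeing that the componentwise action of $C^s$ on $V^s$ is by $\mathbb{Q}$-linear automorphisms. The only point worth a sentence in the written proof is that the $\mathbb{Q}BC$-module structure on $V^s$ (given by the diagonal $B$-action and the coordinatewise $C^s$-action) is compatible, so that $\omega_{C^s}(V^s)$ is counting orbits inside $\Aut(V^s)$ in the intended sense.
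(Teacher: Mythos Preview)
Your proof is correct and follows essentially the same approach as the paper: both arguments rest on the observation that the $C^s$-orbits on $V^s$ are exactly the Cartesian products of $C$-orbits on $V$, so a full set of orbit representatives is given by all $s$-tuples formed from representatives $v_1,\ldots,v_n$ of the $C$-orbits on $V$. Your write-up simply makes the ``if and only if'' step explicit, whereas the paper states the conclusion directly.
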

\begin{proof}
It is enough to note that if $v_{1}, \cdots, v_{n}$ are representatives of the orbits of $V$ by action of $C$, then $\{(v_{i_{1}}, \cdots, v_{i_{s}}) \mid 1 \leqslant i_{1}, \cdots, i_{s} \leqslant n \}$ is a set of representatives of the orbits of $V^{s}$ by action of $C^{s}$.
\end{proof}

Assume that $|B|=s$ and let $\xi$ be a primitive $s$th root of unity. We extend the ground ring $\mathbb{Q}$ of $V$ by 
$\xi$ and denote by $\tilde{V}$ the vector space  $V\otimes_{\mathbb{Q}}\mathbb{Q}[\xi]$. 
The actions of $B$ and $C$ on $V$ extends naturally to $\tilde{V}$ by $(v\otimes k)^x=v^x\otimes k$. See \cite[Chapter 4]{Khukhro} for more details. If  $\phi(s)$ is Euler's function, we obtain that $$\tilde{V} =V\otimes_{\mathbb{Q}}\mathbb{Q}[\xi]= \bigoplus_{i=0}^{\phi(s)-1} (V\otimes \mathbb{Q}\xi^i).$$ Clearly  if $\omega_C(V) < \infty$, then $\omega_{C}(V\otimes \mathbb{Q}\xi^i)< \infty$ for any $i$. Therefore by Lemma \ref{rem} we conclude that $\omega(\tilde{V})< \infty$.

\begin{lem}\label{Lem1}
Let $W \leqslant V$ be an irreducible $C$-submodule. Assume that $\omega_{C}(V) < \infty$. Then $W^b = W$ for each $b \in B$.
\end{lem}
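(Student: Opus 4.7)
The plan is a proof by contradiction: assume there exists $b\in B$ with $W^b\neq W$ and produce infinitely many $C$-orbits in $V$.

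First I would observe that $W^b$ is again an irreducible $C$-submodule of $V$. This uses that $B$ and $C$ commute elementwise: for any $c\in C$, $(W^b)^c=W^{bc}=W^{cb}=(W^c)^b=W^b$, so $W^b$ is $C$-invariant. Moreover, the map $b\colon W\to W^b$ intertwines the $C$-action (again by $cb=bc$), so it is a $C$-module isomorphism, and hence $W^b$ is irreducible. Then I would invoke the standard consequence of irreducibility: $W\cap W^b$ is a $C$-submodule of the irreducible module $W$, so it equals $0$ or $W$. If $W\cap W^b=W$, then $W\subseteq W^b$, and irreducibility of $W^b$ forces $W=W^b$, contradicting our assumption. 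Hence $W\cap W^b=0$ and the sum $W+W^b=W\oplus W^b$ is direct.

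Next, choose any $w\in W\setminus\{0\}$ and consider the family
\[
v_\lambda:=w+\lambda w^b\qquad(\lambda\in\mathbb{Q}).
\]
Since $b$ is an automorphism and $w\neq 0$, the vector $w^b$ is nonzero. For any $c\in C$, commutation gives $v_\lambda^c=w^c+\lambda(w^c)^b$, whose decomposition with respect to $W\oplus W^b$ has $W$-part $w^c\in W$ and $W^b$-part $\lambda(w^c)^b\in W^b$. If $v_\mu$ lies in the $C$-orbit of $v_\lambda$, matching the components of the unique decomposition in $W\oplus W^b$ yields $w=w^c$ and $\mu w^b=\lambda(w^c)^b=\lambda w^b$, so $\mu=\lambda$. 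Therefore the elements $\{v_\lambda:\lambda\in\mathbb{Q}\}$ lie in pairwise distinct $C$-orbits, which contradicts $\omega_C(V)<\infty$.

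The only real content is the first step, namely that the commutation of $B$ and $C$ promotes $W^b$ to an irreducible $C$-submodule; once this and the direct-sum decomposition are in hand, the rest is a direct verification using that the $W$- and $W^b$-components of an element of $W\oplus W^b$ are uniquely determined. I do not anticipate a serious obstacle beyond being careful that $W\cap W^b=0$ follows without any finite-dimensionality assumption on $W$, which it does thanks to irreducibility of both $W$ and $W^b$.
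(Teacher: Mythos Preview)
Your proof is correct and follows essentially the same approach as the paper: establish that $W^b$ is an irreducible $C$-submodule with $W\cap W^b=0$, then exhibit an infinite family of elements of $W\oplus W^b$ in pairwise distinct $C$-orbits by comparing $W$- and $W^b$-components. The only cosmetic difference is that the paper indexes its family as $\{\,w+(w^b)^c : c\in C\,\}$ rather than $\{\,w+\lambda w^b : \lambda\in\mathbb{Q}\,\}$, which amounts to the same thing since the nonzero scalars lie in $C=C_{\Aut(V)}(B)$.
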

\begin{proof}
Note that both $W$ and $W^b$ are irreducible $C$-submodule of $V$. Suppose that $W^b\neq W$. Then $W\cap W^b=0$. Let $w\in W$ and $1\neq c\in C$. We will prove that $w+w^b$ and $w+(w^b)^c$ are not in the same orbit. In fact, if $(w+w^b)^x=w+(w^b)^c$ for any $x\in C$, then $w^x+(w^b)^x=w+(w^b)^c$ which implies that $w^x=w$. Thus, $(w+w^b)^x=w+w^b$. Therefore, for any $c\in C$ the elements $w+(w^b)^c$ are in different orbits. 
\end{proof}

\begin{lem}
Let $W \leqslant \tilde{V}$ be an irreducible $C$-submodule, where $\tilde{V}=V\otimes_{\mathbb{Q}}\mathbb{Q}[\xi]$. If $\omega_{C}(\tilde{V}) < \infty$, then any element $b\in B$ acts as scalar on $W$. 
\end{lem}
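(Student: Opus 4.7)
The plan is to exploit that every $b\in B$ has order dividing $s=|B|$ and that over $\mathbb{Q}[\xi]$ the polynomial $x^{s}-1=\prod_{i=0}^{s-1}(x-\xi^{i})$ splits into pairwise distinct linear factors. This will force $b$ to act semisimply on $\tilde{V}$, hence on any $b$-invariant subspace, with eigenvalues among the $s$th roots of unity in $\mathbb{Q}[\xi]$. The remaining step is to use the commutation $[B,C]=1$ and the irreducibility of $W$ as a $C$-module to collapse the eigenspace decomposition of $W$ to a single summand.

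First I would import Lemma \ref{Lem1} to the extended module: its proof uses only the hypothesis $\omega_{C}(\cdot)<\infty$, the fact that $W\cap W^{b}=0$ whenever two irreducible $C$-submodules differ, and the commutation of $C$ with $B$. All of these are available for $\tilde{V}$ under the hypothesis $\omega_{C}(\tilde{V})<\infty$, so $W^{b}=W$ for every $b\in B$. Consequently $b$ restricts to a $\mathbb{Q}[\xi]$-linear automorphism of $W$ whose minimal polynomial divides $x^{s}-1$, and therefore $W=\bigoplus_{\lambda}W_{\lambda}$ decomposes into $b$-eigenspaces indexed by $s$th roots of unity $\lambda$.

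Because each $c\in C$ commutes with $b$ and acts $\mathbb{Q}[\xi]$-linearly on $\tilde{V}$ (by the extension rule $(v\otimes k)^{c}=v^{c}\otimes k$), every $W_{\lambda}$ is $C$-stable. Irreducibility of $W$ as a $C$-module then forces exactly one $W_{\lambda}$ to be nonzero, so $b$ acts on $W$ as multiplication by a single scalar $\lambda\in\mathbb{Q}[\xi]$. I do not foresee a serious obstacle; the only point worth a line of care is the transfer of Lemma \ref{Lem1} from $V$ to $\tilde{V}$, which is immediate from the definitions, together with the verification that $C$ acts $\mathbb{Q}[\xi]$-linearly on $\tilde{V}$ so that its action commutes with the scalar action of $b$ on each eigenspace.
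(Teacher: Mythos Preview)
Your proposal is correct and follows essentially the same route as the paper: invoke Lemma~\ref{Lem1} to obtain $W^{b}=W$, use that $\mathbb{Q}[\xi]$ splits $x^{s}-1$ so that $b$ acts semisimply on $W$, observe that the eigenspaces are $C$-stable because $[B,C]=1$, and conclude from irreducibility that only one eigenspace survives. Your write-up is in fact more explicit than the paper's on the diagonalizability of $b$ and on why $C$ acts $\mathbb{Q}[\xi]$-linearly, both of which are worth the extra line.
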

\begin{proof}
By Lemma \ref{Lem1} we have that $W^b=W$ for any $b\in B$. On the other hand, for any $b\in B$ the subspace $\{v\in V \mid v^b=\lambda v, \ \ \lambda \in \mathbb{Q}[\xi] \}$ is a $C$-submodule of $\tilde{V}$. Then any element $b\in B$ acts as scalar on $W$, since $\mathbb{Q}[\xi]$ is a splitting field for $B$.
\end{proof}

\begin{lem}\label{abel}
If $\omega_C(V)<\infty$, then $B$ is abelian.
\end{lem}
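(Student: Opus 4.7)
The plan is to reduce to a splitting field. Set $\tilde V = V\otimes_{\mathbb{Q}}\mathbb{Q}[\xi]$ with $\xi$ a primitive $|B|$-th root of unity. The discussion preceding Lemma \ref{Lem1} gives $\omega_C(\tilde V)<\infty$, and the action of $B\le\Aut(V)$ extends faithfully to $\tilde V$ via $b(v\otimes k)=bv\otimes k$ (indeed, if $b$ acts trivially on $\tilde V$ then it acts trivially on $V\otimes 1$, so $b=1$ in $\Aut(V)$). The aim is to prove that every simple $\mathbb{Q}[\xi]B$-summand of $\tilde V$ is one-dimensional; combined with faithfulness this yields a monomorphism from $B$ into a product of copies of $\mathbb{Q}[\xi]^{\times}$, whence $B$ is abelian.

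The first structural observation exploits the finite-orbit hypothesis: every $C$-submodule of $\tilde V$ is a $C$-invariant subset, hence a union of $C$-orbits, and there are at most $2^{\omega_C(\tilde V)}$ such unions. Thus $\tilde V$ has only finitely many $C$-submodules; consequently every non-zero $C$-invariant subspace of $\tilde V$ contains a minimal one, i.e., an irreducible $C$-submodule.

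Now decompose $\tilde V=\bigoplus_{\chi}\tilde V_{\chi}$ into $\mathbb{Q}[\xi]B$-isotypic components (Maschke's theorem applies since $B$ is finite and $\charac\mathbb{Q}[\xi]=0$, and $\mathbb{Q}[\xi]$ is a splitting field for $B$). Each $\tilde V_{\chi}$ is $C$-invariant because $C$ centralizes $B$. Assume $\tilde V_{\chi}\neq 0$ and pick an irreducible $C$-submodule $W\subseteq\tilde V_{\chi}$ using the previous paragraph. Lemma \ref{Lem1} applied to $\tilde V$ ensures that $W$ is $B$-invariant, and the preceding lemma (on the scalar action of $B$ on irreducible $C$-submodules of $\tilde V$) yields a scalar $\lambda_b\in\mathbb{Q}[\xi]$ with $b\cdot w=\lambda_b w$ for every $w\in W$ and every $b\in B$. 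Hence every one-dimensional $\mathbb{Q}[\xi]$-subspace of $W$ is a simple $B$-submodule of dimension one; but every simple $B$-submodule of $\tilde V_{\chi}$ has $\mathbb{Q}[\xi]$-dimension $\chi(1)$, forcing $\chi(1)=1$.

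Therefore every simple $\mathbb{Q}[\xi]B$-summand of $\tilde V$ is one-dimensional, and the action of $B$ on $\tilde V$ factors through a homomorphism $B\to\prod_k\mathbb{Q}[\xi]^{\times}$ recording the scalar on each summand. This map has abelian target and, by faithfulness, trivial kernel, so $B$ is abelian. The delicate step is extracting an irreducible $C$-submodule inside each non-zero isotypic component: without the finite-orbit hypothesis, $\tilde V_\chi$ need be neither semisimple nor finite-dimensional as a $C$-module, so one cannot in general find minimal $C$-submodules; here the combinatorial bound on the number of $C$-submodules supplies one automatically, and the previous lemmas do the rest.
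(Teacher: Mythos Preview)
Your argument is correct and reaches the goal by a genuinely different decomposition than the paper's. The paper works one element $b\in B$ at a time: using that every vector lies in a finite-dimensional $B$-invariant subspace and that $\mathbb{Q}[\xi]$ contains all $|B|$th roots of unity, it diagonalises $b$ on $\tilde V$, observes that each eigenspace $V_\lambda$ is a $C$-submodule, and invokes Lemma~\ref{Lem1} to conclude $V_\lambda^y=V_\lambda$ for all $y\in B$; iterating over all $b$ refines $\tilde V$ into simultaneous eigenspaces, on which $B'$ visibly acts trivially. You instead take the global $\mathbb{Q}[\xi]B$-isotypic decomposition $\tilde V=\bigoplus_\chi \tilde V_\chi$ and use the combinatorial bound (at most $2^{\omega_C(\tilde V)}$ $C$-submodules) to locate an irreducible $C$-submodule inside each nonzero $\tilde V_\chi$, after which the two preceding lemmas force $\chi(1)=1$ and hence $B$ abelian by faithfulness. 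Your route is more conceptual and makes the role of the finite-orbit hypothesis very transparent---it is precisely what guarantees the existence of minimal $C$-submodules---while the paper's eigenspace argument is more hands-on and avoids the isotypic formalism. Both rest on the same passage to $\tilde V$ and on Lemma~\ref{Lem1}; you additionally use the scalar-action lemma as a black box, whereas the paper essentially rederives its content through the simultaneous-eigenspace description.
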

\begin{proof}

Since $V$ and $\tilde{V}$ have finitely many orbits under the action of $C$, without loss of generality it will be assumed that $V$ is a vector space which the ground field is a finite extension of $\mathbb{Q}$ and contains $\xi$.

For any $v\in V$ it is easy to see that $v$ is contained in a finite dimensional $B$-invariant subspace of $V$. So, although $V$ is not of finite dimension, for any $b\in B$ we have that $b$ is diagonalizable and then $$V=\bigoplus_{\lambda \in \langle \xi \rangle} V_{\lambda}$$ where $V_{\lambda}$ is the eigenspace of $b$ associated with $\lambda$. Since each subspace $V_{\lambda}$ is a $C$-submodule, we obtain that $V_{\lambda}^{y}=V_{\lambda}$ for any $y \in B$ by Lemma \ref{Lem1}. Now it became clear that $V$ is a direct sum of ``simultaneous eigenspaces'', that is, $V$ is a direct sum of subspace of the form  $\{v\in V \mid v^{b}=\lambda(b)v \ \textrm{for each } b\in B\}$ where $\lambda(b)$ is a eigenvalue of $b\in B$. Therefore $B'=1$, since $B'$ acts trivially on simultaneous eigenvectors, which completes the proof. 
\end{proof}

The remainder of this section is devoted to present semidirect products with exactly three orbits under automorphisms. Recall that a group $G$ is called almost homogeneous if $\omega(G) \leqslant 3$. The next example compile some almost homogeneous groups which appeared in the papers \cite{BDdM,LM,MS1}.   

\begin{ex}\label{ex:almost-homogeneous-groups}
{ \ }
\begin{enumerate}
\item In \cite{LM}, T.\,J. Laffey and D. MacHale characterize finite groups $G$ that are not $p$-groups and have the property that $\omega(G) = 3$. According to this characterization, such a group has order of the form $pq^n$ where $p$ and $q$ are distinct
primes. Furthermore, the Sylow $q$-subgroup $Q$ is elementary abelian and the Sylow $p$-subgroup acts fixed-point-freely on $Q$.  
\item (H. M\"aurer, M. Stroppel \cite[Lemma 2.6 and Proposition 2.7]{MS1}) Let $p$ be a prime. Let $\mathbb{F}$ be a field, and assume that the cyclotomic polynomial $\Phi_p$ given by $\Phi_{p} = (x^{p}-1)/(x-1)$ has $p-1$ different roots in $\mathbb{F}$, and that $\Aut(\mathbb{F})$ acts transitively on the set of these roots. For any vector space $V$ over $\mathbb{F}$, define the semidirect product $V \rtimes \Omega$, where $\Omega=\{f \in \mathbb{F} \mid f^p=1\}$ acts on $V$ by scalar multiplication. Then $\omega(V \rtimes \Omega)=3$.
\item In \cite{BDdM}, the authors prove that a soluble mixed order group $G$ has $\omega(G)=3$ if and only if $G=A \rtimes H$ where $|H|=p$ for some prime $p$, $H$ acts fixed-point-freely on $A$ and $A=\mathbb{Q}^n$ for some positive integer $n$.
\end{enumerate}
\end{ex}

The next example shows that there are nonnilpotent torsion-free groups of infinite rank with exactly three orbits under automorphisms. 

Let $c$ be a real transcendental number. Define $D=\langle c^{a} \mid a \in \mathbb{Q}^{+} \rangle$. Note that $D$ is a subgroup of  $\mathbb{R}^{\times}$ isomorphic to the additive group $\mathbb{Q}^+$. Now, let $S$ be the set of nontrivial elements of $D$, that is, $S=\{c^{a} \mid a \in \mathbb{Q}^{\times}\}$ and $\mathbb{K}=\mathbb{Q}(S)$ the transcendental  field extension generated by $S$. It was proved in \cite[Example 2.2]{Robert} that the automorphism groups of the field $\mathbb{K}$ is isomorphic to $\mathbb{Q}^{\times}$ and it is generated by automorphism mapping $c$ to $c^{a}$ where $a \in \mathbb{Q}^{\times}$. In particular, under this automorphisms $D$ has two orbits.

\begin{ex} \label{ex.transcendental}
Let 
$$G = \left\{\left[ \begin{array}{cc}
                        d & 0 \\
                        k & 1 
                        \end{array}  \right] \mid d\in D, k\in \mathbb{K} \right\} \simeq \mathbb{K}\rtimes D.$$
Then $\omega(G) = 3$. 
\end{ex}
\begin{proof}
Let $C=\left\{\left[ \begin{array}{cc}
                        k' & 0 \\
                        0 & 1
                        \end{array}  \right] \mid k'\in  \mathbb{K}^{\times} \right\}$. Then $C$ acts by automorphisms on $G$ and under this action the subgroup $$N_1=\left\{\left[ \begin{array}{cc}
                        1 & 0 \\
                        k & 1
                        \end{array}  \right] \mid k\in  \mathbb{K} \right\}$$ has exactly two orbits. Moreover, $C$ acts trivially on $$N_2=\left\{\left[ \begin{array}{cc}
                        d & 0 \\
                        0 & 1
                        \end{array}  \right] \mid d\in  D \right\}.$$ On the other hand, the automorphisms of $\mathbb{K}$ acts on $G$ in such a way that $N_2$ has two orbits. Then $G$ has three automorphism orbits. 
\end{proof}

It is worth to mention that infinite groups with small number of automorphism orbits need not be soluble. For instance, G. Higman, B. Neumann and H. Neumann have constructed a torsion-free nonabelian simple group with exactly $2$ automorphism orbits. Recently, D. Osin, in \cite{Osin}, proved that any countable torsion-free group can be embedded into a $2$-generated group $M$ with exactly $2$ conjugacy classes. In particular, $\omega(M)=2$. 

\section{Proof of Theorem \ref{pro1}}

In this section we prove Theorem \ref{pro1} and Corollary \ref{pro2}.


\begin{proof}[Proof of Theorem \ref{pro1}]

Recall that $A$ is an abelian group and $B$ a finite subgroup of $\Aut(A)$. Assume that $A$ is a characteristic subgroup of $G = A \rtimes B$. We need to prove that $\omega(G)< \infty$ if and only if $A$ has finite automorphism orbits under the action of $C_{\Aut(A)}(B)$.

Suppose that $\omega(G)$ is finite. Taking into account that $A$ is a characteristic subgroup of $G$ we can consider $\Aut(G)$ acting on $G/A$ and then we conclude that there exists a subgroup $X \leqslant \Aut(G)$ of finite index which acts trivially on $G/A$. In other words, for any $b\in B$ the cosets $Ab$ are invariant by all elements of $X$. Note that $G$ has finite orbits under the action of $X$ since $X$ has finite index.

Now, for each $\alpha \in X$ we  define $\bar{\alpha} \in C_{\Aut(A)}(B)$ by
$$(ab)^{\bar{\alpha}} = a^{\alpha} b, \text{ for any } a\in A \text { and } b\in B.$$

In fact, since $A$ is abelian and $b^{\alpha}=cb$ for some $c\in A$ we have
$$(a^{b})^{\bar{\alpha}} = (a^b)^{\alpha} = (a^{\alpha})^{b^{\alpha}} = (a^{\alpha})^{c b} = (a^{\alpha})^{b} = (a^{\bar{\alpha}})^{b^{\bar{\alpha}}},$$
and then ${\bar{\alpha}}$ is an automorphism of $G$ such that $b^{\bar{\alpha}}=b$. Moreover, note that for each $\alpha \in X$ the automorphism $\bar{\alpha}$ acts in the same way on $A$. Then number of orbits in $A$ under the action of $X$ and $C_{\Aut(A)}(B)\cap X$ is the same. 

Now, assume that $A$ has finite orbits by the action of $C_{\Aut(A)}(B)$. For any $c\in C_{\Aut(A)}(B)$, $a \in A$ and $b \in B$ we have that $(ab)^{c} = a^{c} b$. Therefore $G=A \rtimes B$ has at most $|B|\omega_{C}(A)$ automorphism orbits.
\end{proof}

The next lemma can be found in \cite[Theorem 3.2.2]{gorenstein}.

\begin{lem}\label{represAbel}
If a finite abelian group $H$ possesses a faithful irreducible representation on a vector space $V$, then $H$ is cyclic.
\end{lem}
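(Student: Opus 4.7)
The plan is to reduce the conclusion to the classical fact that every finite subgroup of the multiplicative group of a field is cyclic; the desired field will be manufactured inside the endomorphism ring of $V$ via Schur's lemma.

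Let $\mathbb{F}$ denote the ground field of $V$, and regard $V$ as an irreducible $\mathbb{F}[H]$-module. First I would invoke Schur's lemma to conclude that $D := \mathrm{End}_{\mathbb{F}[H]}(V)$ is a division ring. Because $H$ is abelian, every $h \in H$ commutes with the entire $H$-action on $V$ and so acts as an element of $D$; faithfulness of the representation then yields an embedding $H \hookrightarrow D^{\times}$. Next I would look at the $\mathbb{F}$-subalgebra $K \subseteq D$ generated by the image of $H$. It is commutative (since $H$ is abelian) and finite-dimensional over $\mathbb{F}$ (since $H$ is finite, so the image of $\mathbb{F}[H]$ in $D$ is spanned by finitely many elements), hence it is a finite-dimensional commutative $\mathbb{F}$-algebra with no zero divisors, and therefore a field. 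Consequently $H$ sits inside $K^{\times}$ as a finite subgroup, and such a subgroup is cyclic because its $n$ elements are all roots of the degree-$n$ polynomial $x^{n} - 1$ over a field, a classical application of the fact that a polynomial of degree $n$ over a field has at most $n$ roots.

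The only step that deserves care is the opening appeal to Schur's lemma: one must remember that its conclusion holds for an arbitrary ground field, not merely for an algebraically closed one, and that no finite-dimensionality hypothesis on $V$ is required, because the finiteness of $H$ alone guarantees that the subalgebra $K$ is finite-dimensional over $\mathbb{F}$ and hence that the integral-domain-implies-field step goes through.
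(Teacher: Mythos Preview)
Your argument is correct. The paper itself does not supply a proof of this lemma; it simply records it as \cite[Theorem~3.2.2]{gorenstein} and moves on. Your route via Schur's lemma---embedding $H$ into the units of the commutative finite-dimensional subalgebra $K$ of $\mathrm{End}_{\mathbb{F}[H]}(V)$ and then invoking the classical fact that finite multiplicative subgroups of a field are cyclic---is the standard proof and is essentially what one finds in Gorenstein. Your closing remark is also apt: the argument needs no hypothesis on $\dim_{\mathbb{F}} V$, only the finiteness of $H$, which is exactly how the lemma is applied later in the paper (to $\mathbb{Q}$-vector spaces that are finite-dimensional anyway, but the generality does no harm).
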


We are now in a position to prove Corollary \ref{pro2}.

\begin{proof}[Proof of Corollary \ref{pro2}]
Recall that $A$ is a finite dimensional vector space over $\mathbb{Q}$ and $B$ a finite subgroup of $\Aut(A)$. We need to show that $\omega(A \rtimes B)< \infty$ if and only if $B$ is abelian.

If $\omega(G)< \infty$, then we can use Lemma \ref{abel}
 to conclude that $B$ is abelian.
 
 Assume that $G=A \rtimes B$ where $B$ is an abelian group. We consider $A$ as a vector space over $\mathbb{Q}$. Then Maschke's Theorem \cite[3.3.1]{gorenstein} gives that $A$ is a completely reducible $B$-module. Hence $$A=V_1\oplus \cdots \oplus V_t $$ where $V_i$ is an irreducible $B$-submodule.   In particular, we have that there exists a $B$-invariant subspace $U$ such that $A=V_j\oplus U$ for any $j\in \{1,\cdots,t\}$.
 
 Let $V_j$ be a component of $A$ in the above decomposition. By Lemma \ref{represAbel} if $K$ is the kernel of the action of $B$ on $V_j$, then $B/K$ is cyclic. Let $x$ be a generator of $B/K$ and $d$ the degree of the minimal polynomial of $x$.  Let $v$ and $w$ be nontrivial elements of $V_j$. Then $\{v, \cdots, v^{x^d}\}$ and $\{w, \cdots, w^{x^d}\}$ are bases of $V_j$. Now,  a routine calculation shows that the map $\alpha$ defined by $(v^{x^j})^{\alpha}=w^{x^j}$, $u^{\alpha}=u$ ($u\in U$) and $b^{\alpha}=b$ for any $b\in B$ extends to an automorphism of $G$. Indeed, the automorphism $\alpha$ commute with $x$ since $(v^{x^j})^{\alpha}=(w^{x^{j}})=(v^{\alpha})^{x^j}$. Hence, for any component $V_j$ and any two elements $v,w\in V_j$ there is $\alpha \in C_{\Aut(A)}(B)$ such that $v^{\alpha}=w$. Therefore $A$ has finitely many automorphism orbits under the action of $C_{\Aut(G)}(B)$ and by Theorem \ref{pro1} it is sufficient to conclude that $\omega(G)< \infty$.
\end{proof}

\section{Proof of Theorem \ref{thm:linear}}

Let $G$ be a group and $H$ a subgroup of $\Aut(G)$. We say that $H$ stabilizes a normal $H$-invariant series $$G=G_1>G_2>\cdots >G_t>G_{t+1}=1$$ if $A$ acts trivially on each factor $G_{i-1}/G_i$.

\begin{lem}\label{stab}
Let $G$ be a torsion-free group and $H$ a finite subgroup of $\Aut(G)$. If a subgroup $N$ of $H$ stabilizes a normal $H$-invariant series $$G=G_1>G_2>\cdots >G_t>G_{t+1}=1,$$ then $N=1$.
\end{lem}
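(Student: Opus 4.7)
The plan is to argue by contradiction, reducing to the case where $N$ contains an element of prime order, and then performing a downward induction along the stabilized series to conclude that this element acts trivially on all of $G$.

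Suppose for contradiction that $N \neq 1$. Since $N$ is finite, it contains a nontrivial element $\alpha$; replacing $\alpha$ by a suitable power, we may assume $\alpha$ has prime order $p$. Note $\alpha$ still stabilizes the given series because the series is $H$-invariant and $\langle \alpha \rangle \leqslant N$. The goal is to show that $\alpha$ acts trivially on $G_i$ for every $i$, by downward induction on $i$ from $t$ down to $1$. The base case $i = t$ follows immediately: since $\alpha$ stabilizes $G_t/G_{t+1} = G_t/1 = G_t$, it acts trivially on $G_t$.

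For the inductive step, assume $\alpha$ fixes $G_{i+1}$ pointwise. Given $g \in G_i$, set $c := g^{-1} g^\alpha$. Because $\alpha$ stabilizes the series, $c \in G_{i+1}$, so by the inductive hypothesis $c^\alpha = c$. A short computation then yields
\[
g^{\alpha^{k}} = g \, c^{k}, \qquad k \geqslant 0,
\]
proved by induction on $k$: indeed $g^{\alpha^{k+1}} = (g c^k)^\alpha = g^\alpha (c^\alpha)^k = g c \cdot c^k = g c^{k+1}$. Setting $k = p$ and using $\alpha^p = 1$ gives $g = g c^p$, i.e.\ $c^p = 1$. Since $G$ is torsion-free, $c = 1$, so $g^\alpha = g$. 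This completes the inductive step.

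Taking $i = 1$ we conclude that $\alpha$ acts trivially on $G = G_1$, contradicting $\alpha \neq 1$ as an element of $\Aut(G)$. Hence $N = 1$. The only delicate point is ensuring the iteration formula $g^{\alpha^k} = g c^k$, which is where the inductive hypothesis (that $c \in G_{i+1}$ is $\alpha$-fixed) is essential; the torsion-freeness of $G$ is then used exactly once, to convert the identity $c^p = 1$ into $c = 1$.
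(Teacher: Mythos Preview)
Your proof is correct and follows essentially the same route as the paper: a downward induction along the series, using the identity $g^{\alpha^{k}}=gc^{k}$ (the paper writes $g^{x^{i}}=gy^{i}$) together with torsion-freeness to force $c=1$. The only cosmetic difference is that you first pass to an element of prime order, whereas the paper works directly with an arbitrary $x\in N$ of order $n$; this extra reduction is harmless but unnecessary.
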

\begin{proof}
For any $x\in N$ and $g\in G_{t-1}$ we have that $g^x=gy$ for some $y\in G_t$. Moreover, we have that $g^{x^i}=gy^{i}$ for any $i$. Hence, if $|x|=n$, then $g=gy^n$, whence $y=1$ and therefore $g^x=g$. Now, by induction on $t$ it is easy to see that $g^x=g$ for any $g\in G$.
\end{proof}

Recall that a group $G$ is said to be radicable if each element is an $n$th power for every positive integer $n$. If $G$ is abelian it is also called divisible. Note that if $G$ is a nilpotent group such that the factor $\gamma_i(G)/\gamma_{i+1}(G)$ is divisible for any $i$, then $G$ is radicable. The next two results were proved in \cite{BDdM}.

\begin{lem}\label{lem.abelian.sub}
Let $A$ be a divisible normal abelian subgroup of finite index of a group $G$. Then there exists a subgroup $H$ of $G$ such that $G = A \rtimes H$.
\end{lem}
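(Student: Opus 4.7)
The plan is to view the statement as a splitting problem for the short exact sequence
\[
1 \longrightarrow A \longrightarrow G \longrightarrow Q \longrightarrow 1, \qquad Q = G/A,
\]
and to attack it with ordinary group cohomology. Equivalence classes of such extensions (with fixed $Q$-action on $A$ coming from conjugation) are classified by $H^{2}(Q,A)$, so the task reduces to showing this group vanishes; once it does, any homomorphic section $s'\colon Q\to G$ furnishes a complement $H=s'(Q)$ with $G=A\rtimes H$.

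To carry this out I would pick any set-theoretic section $s\colon Q\to G$ with $s(1_Q)=1_G$, form the $2$-cocycle $f(x,y)=s(x)s(y)s(xy)^{-1}\in A$, and run the standard averaging trick. Writing $n=|Q|$ and using additive notation for $A$, I would sum the cocycle identity over its third variable: with $h(x):=\sum_{z\in Q}f(x,z)$, a routine telescoping yields $n\cdot f=\delta h$. Since $A$ is divisible, every value $h(x)$ admits an $n$-th root, so I can choose a cochain $h'\colon Q\to A$ with $n\cdot h'=h$, and then $n(f-\delta h')=0$. The candidate modified section is $s'(x)=s(x)\cdot(-h'(x))$, whose image would be the desired complement.

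The delicate step is precisely the passage from $n(f-\delta h')=0$ to $f=\delta h'$: one is left a priori with a cocycle valued in the $n$-torsion subgroup of $A$, which in general need not represent the zero class in $H^{2}(Q,A)$. This is the main obstacle. In the context where this lemma is actually applied in the paper, $A$ is a torsion-free radicable subgroup (the one extracted in Theorem~\ref{thm:linear}), so its $n$-torsion is trivial and the obstruction disappears automatically, closing the argument. In a general divisible setting one would decompose $A$ into its $p$-primary components, handle the primes dividing $n$ by a Schur--Zassenhaus-style induction against a Sylow tower of $Q$, and collect the complements at the end.
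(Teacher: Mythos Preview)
The paper does not prove this lemma; it is quoted from \cite{BDdM}. Your cohomological approach is the natural one, and the argument you give is complete and correct when $A$ is torsion-free: then $A$ is uniquely divisible, so from $n(f-\delta h')=0$ you get $f=\delta h'$ immediately. Since every invocation of Lemma~\ref{lem.abelian.sub} in this paper has $A$ torsion-free (it is always $K$ or $\cent(K)$ for the torsion-free radicable subgroup $K$ of Proposition~\ref{lem.nilpotent}), your proof already covers everything the paper needs.

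What does \emph{not} work is your proposed patch for general divisible $A$. No Schur--Zassenhaus-style reduction can succeed there, because the lemma as stated---with $A$ merely divisible---is false. Take $A=\mathbb{Q}/\mathbb{Z}$ with trivial $Q$-action and $Q=(\mathbb{Z}/p)^{2}$. Since $\mathbb{Q}/\mathbb{Z}$ is injective over $\mathbb{Z}$, universal coefficients give
\[
H^{2}\bigl(Q,\mathbb{Q}/\mathbb{Z}\bigr)\;\cong\;\operatorname{Hom}\bigl(H_{2}(Q,\mathbb{Z}),\,\mathbb{Q}/\mathbb{Z}\bigr)\;\cong\;\mathbb{Z}/p\;\neq\;0,
\]
the Schur multiplier of $(\mathbb{Z}/p)^{2}$ being $\mathbb{Z}/p$. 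Concretely, the central extension $G$ of $(\mathbb{Z}/p)^{2}$ by $\mathbb{Q}/\mathbb{Z}$ in which two lifts $x,y$ of the generators satisfy $[x,y]=\tfrac{1}{p}+\mathbb{Z}$ admits no complement: any two lifts of the generators of $Q$ have the same nontrivial commutator, so no subgroup of $G$ mapping isomorphically onto $Q$ can be abelian. Thus the $n$-torsion obstruction you isolated is genuine and unremovable; the lemma really needs a torsion-free hypothesis, which is implicitly present in all its uses here.
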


\begin{prop}
\label{lem.soluble.sub} Let $n$ be a positive integer. Let $G$ be a group such that $\omega(G)< \infty$ and $K$ a torsion-free characteristic subgroup of $G$ with finite index $n$. If $K$ is soluble, then there exists a subgroup $H$ of $G$ such that $G = K \rtimes H$. 
\end{prop}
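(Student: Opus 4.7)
The plan is to argue by induction on the derived length $d$ of $K$, combining a divisibility result for characteristic torsion-free abelian subgroups with Lemma \ref{lem.abelian.sub} to build the complement one layer at a time.

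First I would establish the following sublemma: any characteristic torsion-free abelian subgroup $A$ of $G$ is divisible. The argument adapts the orbit trick of Lemma \ref{Lem1}. Fix $1 \neq a \in A$ and an integer $m \geq 1$; the powers $a^{m^k}$, $k \geq 0$, are pairwise distinct since $A$ is torsion-free, so the finiteness of $\Aut(G)$-orbits on $A$ (inherited from $\omega(G)<\infty$ since $A$ is $\Aut(G)$-invariant) forces two of them into the same orbit, giving $i<j$ and $\alpha \in \Aut(G)$ with $\alpha(a^{m^i})=a^{m^j}$. Hence $\alpha(a)^{m^i} = (a^{m^{j-i}})^{m^i}$, and torsion-freeness of the abelian group $A$ yields $\alpha(a) = a^{m^{j-i}}$. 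Applying $\alpha^{-1}$ gives $a = \alpha^{-1}(a)^{m^{j-i}} = (\alpha^{-1}(a)^{m^{j-i-1}})^m$, exhibiting $a$ as an $m$-th power in $A$.

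For the base case $d=1$, $K$ itself is torsion-free abelian, hence divisible by the sublemma, and Lemma \ref{lem.abelian.sub} directly yields $G = K \rtimes H$. For the inductive step one seeks a characteristic torsion-free abelian subgroup $L \leq K$ of $G$ for which $K/L$ is torsion-free of strictly smaller derived length; in the nilpotent case the choice $L = \cent(K)$ works, since $K/\cent(K)$ remains torsion-free nilpotent of smaller class. The sublemma makes $L$ divisible. Passing to $G/L$, the $\Aut(G)$-action descends (as $L$ is characteristic), giving $\omega(G/L) < \infty$ and leaving $K/L$ a torsion-free characteristic subgroup of finite index $n$ in $G/L$. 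The inductive hypothesis then produces a complement $\bar H$ to $K/L$ in $G/L$; lifting $\bar H$ to $H_0 \leq G$ (with $L \leq H_0$ and $[H_0 : L] = n$), Lemma \ref{lem.abelian.sub} applied to the divisible abelian normal subgroup $L$ of $H_0$ gives $H_0 = L \rtimes H$. A direct check then shows $G = K H_0 = K(LH) = KH$ (since $L \leq K$), while $K \cap H \leq K \cap H_0 = L$ together with $L \cap H = 1$ forces $K \cap H = 1$, so $G = K \rtimes H$.

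The principal obstacle is the choice of $L$ in the general soluble (non-nilpotent) case: the natural candidate $L = K^{(d-1)}$ is divisible abelian, but $K/L$ can still carry torsion (as the torsion-free Klein-bottle group $\langle a,b \mid bab^{-1}=a^{-1}\rangle$ illustrates among soluble torsion-free groups), which would break the induction. I expect to handle this either by first showing that under the present hypotheses $K$ is in fact nilpotent---combining the divisibility argument with Lemma \ref{stab} applied to the finite action of $G/K$ on an isolated derived series of $K$ to rule out the twisted sections responsible for such torsion---or by refining $L$ via a characteristic isolator so that $K/L$ is torsion-free at each stage. Once such $L$ is in hand, the induction terminates at the abelian base case and the complement $H$ assembles as above.
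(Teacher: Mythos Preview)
The paper does not supply its own proof of this proposition; it is quoted from \cite{BDdM}. Your strategy---first showing that every $\Aut(G)$-invariant torsion-free abelian subgroup of $G$ is divisible, then peeling off one abelian layer at a time via Lemma~\ref{lem.abelian.sub}---is the natural one, and your argument is essentially complete when $K$ is nilpotent, which is in fact the only case invoked later (in the proof of Theorem~\ref{thm:linear}). One technical slip: in the inductive step you assert that $K/L$ is \emph{characteristic} in $G/L$, but an automorphism of $G/L$ need not lift to $G$. What you actually have, and all you need, is that $K/L$ is invariant under the image of $\Aut(G)$ in $\Aut(G/L)$ and carries finitely many orbits under that image; reformulating the induction with a fixed acting group $\Gamma\leqslant\Aut(G)$ (rather than the full automorphism group at each stage) removes the issue.

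The obstacle you flag in the general soluble case is genuine, and your first proposed remedy---proving that $K$ must be nilpotent---fails: the group of Example~\ref{ex.transcendental} is torsion-free, metabelian, non-nilpotent with $\omega=3$, so taking $K=G$ (hence $n=1$) satisfies all hypotheses while $K$ is not nilpotent, and there is no evident mechanism forcing nilpotency for $n>1$ either. The isolator route is the viable one, but your sketch stops short of the two points that carry the content: for $d\geqslant 3$ the torsion elements of the soluble group $K/K^{(d-1)}$ need not form a subgroup, so the isolator $L$ is not automatically well defined; and even when it is, one must still show that $L$ is abelian (not merely that $K^{(d-1)}$ is) before Lemma~\ref{lem.abelian.sub} can be applied inside $H_0$. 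Filling these in is precisely where the argument of \cite{BDdM} does its work.
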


M. Schwachh\"ofer and M. Stroppel in \cite{S2}, have shown that if $G$ is an abelian group with finitely many automorphism orbits, then $G = D \oplus \Tor(G)$, where $D$ is a torsion-free divisible characteristic subgroup of $G$ (see also \cite[Theorem A]{BD18}). Now, we extends this result to nilpotent groups. 

\begin{prop}
\label{lem.nilpotent} Let $G$ be a nilpotent group such that $\omega(G)< \infty$. Then $G=K \times \Tor(G)$ where  $K$ is torsion-free radicable subgroup.
\end{prop}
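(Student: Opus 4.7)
I would proceed by induction on the nilpotency class $c$ of $G$, with the base case $c=1$ (abelian) being the Schwachh\"ofer--Stroppel theorem \cite[Lemma 1.1]{S2}: it gives $G = D \oplus \Tor(G)$ with $D$ torsion-free divisible (equivalently radicable).

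For the inductive step $c \geq 2$, set $T := \Tor(G)$, which is a characteristic subgroup of $G$ (standard for nilpotent groups). Since $\Aut(G)$ preserves element orders and $\omega(G) < \infty$, only finitely many element orders appear in $G$, so $T$ has bounded exponent, say $n$. Let $K := G^{n}$ be the characteristic verbal subgroup generated by the $n$-th powers. One verifies: $K \cap T = 1$ (if $g^{n} \in T$ then $g \in T$ by isolation of torsion in nilpotent groups, whence $g^{n} = 1$); $K$ is torsion-free (as $\Tor(K) \subseteq K \cap T = 1$); and $KT = G$ \emph{provided} $G/T$ is $n$-divisible. Granting this, since $K$ and $T$ are both normal with $K \cap T = 1$, the commutator $[K,T] \subseteq K \cap T = 1$, so $G = K \times T$, and $K \cong G/T$ inherits radicability from the quotient.

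The heart of the proof is therefore the auxiliary claim: every torsion-free nilpotent group $Q$ with $\omega(Q) < \infty$ is radicable. I would prove this by a parallel induction on class. The abelian base case is again Schwachh\"ofer--Stroppel. For the inductive step, $Z(Q)$ is torsion-free abelian with $\omega(Z(Q)) < \infty$, hence divisible; let $I$ be its isolator in $Q$, that is, $I = \{q \in Q : q^{m} \in Z(Q) \text{ for some } m\geq 1\}$. Because $Q$ is nilpotent, $I$ is a characteristic subgroup, and $Q/I$ is torsion-free nilpotent of strictly smaller class with $\omega(Q/I) \leq \omega(Q) < \infty$, hence radicable by the inductive hypothesis. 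Combined with the general fact that a central extension of a radicable group by a divisible central abelian subgroup is again radicable, it remains to prove that $I$ itself is radicable, after which radicability of $Q$ follows from the central extension principle.

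The main obstacle is this last step. The group $I$ is torsion-free nilpotent with $I/Z(Q)$ torsion of bounded exponent and with $Z(Q)$ central and divisible. I would argue that $I$ is in fact abelian (and hence divisible), using the divisibility of $Z(Q)$ to lift roots along the extension $1 \to Z(Q) \to I \to I/Z(Q) \to 1$ and invoking Lemma \ref{stab}, which prevents a nontrivial finite group from acting trivially on the factors of a normal series of a torsion-free group, to rule out nontrivial commutators inside $I$. Once $I$ is known to be divisible abelian, the auxiliary radicability claim is established, $G/T$ is $n$-divisible, and the decomposition $G = K \times \Tor(G)$ with $K$ torsion-free radicable follows as above.
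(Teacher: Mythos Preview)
Your overall architecture is reasonable but differs from the paper's, and there is one genuine gap.

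\textbf{The gap.} Your justification of $K\cap T=1$ for $K:=G^{n}$ is incomplete. The parenthetical argument (``if $g^{n}\in T$ then $g\in T$, whence $g^{n}=1$'') only shows that each \emph{generator} $g^{n}$ of $K$ lying in $T$ is trivial. But $K=G^{n}$ is the subgroup \emph{generated} by $n$-th powers; a general element of $K$ is a product $a_{1}^{n}\cdots a_{k}^{n}$, and in a non-abelian nilpotent group such a product need not itself be an $n$-th power. For instance, already in class~$2$ one has $a^{n}b^{n}(ab)^{-n}=[b,a]^{\binom{n}{2}}$, so commutator powers of this shape lie in $G^{n}$ and could a priori be nontrivial torsion. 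You therefore have not excluded $K\cap T\neq 1$, and without this the claimed direct decomposition $G=K\times T$ does not follow. The paper avoids this difficulty by working not with $G^{n}$ but with $A^{e}$, where $A$ is the preimage of the torsion-free factor of $G/\cent(G)$ and $e=\exp(\Tor(\cent(G)))$; here the torsion $T_{1}=\Tor(A)$ is \emph{central} in $A$, which is what makes the passage to $e$-th powers controllable, and the paper then establishes $G=K\times\Tor(G)$ by a separate splitting argument based on Lemma~\ref{lem.abelian.sub} rather than by directly checking $K\cap\Tor(G)=1$.

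\textbf{A secondary simplification.} In your auxiliary claim, the isolator $I$ of $\cent(Q)$ in a torsion-free nilpotent group $Q$ is just $\cent(Q)$ itself, because $Q/\cent(Q)$ is again torsion-free (a standard fact for torsion-free nilpotent groups). So the whole discussion of $I$ possibly being larger than $\cent(Q)$, of proving $I$ abelian, and of invoking Lemma~\ref{stab} is unnecessary: the auxiliary claim reduces immediately to ``$\cent(Q)$ is divisible (Schwachh\"ofer--Stroppel), $Q/\cent(Q)$ is radicable by induction, and a central extension with divisible kernel and radicable quotient is radicable''. That part of your argument is correct once streamlined.

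\textbf{Comparison with the paper.} The paper also inducts on the nilpotency class but pivots on $\cent(G)$ rather than on $\Tor(G)$. It obtains $K$ inside the preimage of the torsion-free direct factor of $G/\cent(G)$, and proves $G=\langle K,\Tor(G)\rangle$ by analysing $\langle K,x\rangle$ for each $x\in G\setminus K$ with Lemma~\ref{lem.abelian.sub}. Your route through the auxiliary statement ``torsion-free nilpotent with $\omega<\infty$ is radicable'' is a pleasant standalone fact and would give a cleaner endgame \emph{if} you can close the $K\cap T=1$ gap; as written, that step is where your proof breaks down.
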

\begin{proof}
We argue by induction on the nilpotency class of $G$. 

Assume that $G$ is abelian. By Schwachh\"ofer-Stroppel's result \cite{S2}, $G = D \oplus T$, where $D$ is characteristic torsion free divisible subgroup and $T$ is the torsion subgroup of $G$.

Now, we assume that $G$ is non-abelian. Arguing as in the previous paragraph, we deduce that  $\cent(G) = D_1 \oplus T_1$, where $D_1$ is a characteristic torsion-free divisible subgroup and $T_1$ is the torsion subgroup of $\cent(G)$. By induction $G/\cent(G)$  has the desired decomposition. More precisely, $\cent(G) = D_1 \oplus T_1$ and $G/\cent(G) = \bar{A} \times \Tor(\bar{G})$ where $\bar{A}$ is torsion-free radicable subgroup. Let $A$ be the inverse image of $\bar{A}$. It is clear that $\Tor(A)=T_1$. Set $K=A^e$, where $e=\exp(T_1)$. Then $K$ is torsion-free radicable subgroup and $G/K$ is a torsion group. Since $K$ and $\Tor(G)$ are a normal subgroups, we have that $\langle K, \Tor(G) \rangle=K \times \Tor(G)$. Then, it is suficient to prove that $G=\langle K, \Tor(G) \rangle$. Let $x \in G \setminus K$. Thus, the subgroup $\langle K, x\rangle$ has a torsion-free characteristic subgroup of finite index. We will use induction on the nilpotency class of $K$ and Lemma \ref{lem.abelian.sub} to prove that $\langle K, x\rangle= K \times H$ where $H$ is a finite group. In fact, if $K$ is abelian, then by Lemma \ref{lem.abelian.sub} $\langle K, x\rangle= K \times H$ where $H$ is a finite group. If $K$ is not abelian, then by induction the torsion subgroup of $\langle K, x\rangle/\cent(K)$ is finite. Let $N$ be the inverse image of the torsion subgroup of $\langle K, x\rangle/\cent(K)$. Then $\cent(K)$ has finite index in $N$ and by Lemma \ref{lem.abelian.sub} $N=\cent(K) \times H$ where $H$ is a finite group. Therefore $\langle K, x\rangle= K \times \Tor(G) $, which completes the proof.
\end{proof}

Let us now prove Theorem \ref{thm:linear}.

\begin{proof}[Proof of Theorem \ref{thm:linear}]
Recall that $G$ is a virtually nilpotent group with $\omega(G)<\infty$. We need to prove that  $G$ has a torsion-free radicable nilpotent subgroup $K$ and a torsion subgroup $H$ such that $G=K \rtimes H$ and the derived subgroup $G^{'}= K \times \Tor(G^{'})$.

As $G$ is virtually nilpotent we have that $G$ contains a nilpotent characteristic subgroup $N$ of finite index. It is well-known that $\Tor(N)$ is a characteristic subgroup of $N$ and then normal in $G$. First assume that $\Tor(N)=1$. By Proposition  \ref{lem.soluble.sub}  $G = K \rtimes H$, where $K=N$ is torsion-free and $H$ is a finite subgroup.

Consider an unrefinable central series of $K$ of characteristic subgroups $$K=K_1>K_2>\cdots >K_t>K_{t+1}=1.$$

Note that every factor $V_i=K_i/K_{i+1}$ is an abelian group with finitely many orbits and then $V_i$ can be considered as a vector space over $\mathbb{Q}$.

Now, taking into account that $K$ is a characteristic subgroup of $G$ we can consider $\Aut(G)$ acting on $G/K$ and then we conclude that there exists a subgroup $X \leqslant \Aut(G)$ of finite index which acts trivially on $G/A$. In other words, for any $h\in H$ the cosets $Kh$ are invariant by all elements of $X$. Note that $G$ has finite orbits under the action of $X$ since $X$ has finite index.

Now, for each $\alpha \in X$ we have that $h^{\alpha}=kh$ for some $k\in K$. Then for any $v\in V_i$ we have 
$$(v^h)^{\alpha} = (v^{\alpha})^{h^{\alpha}} = (v^{\alpha})^{kh} = (v^{\alpha})^{h} ,$$
since $[v^{\alpha},k]\in K_{i+1}$. In other words, the action of $H$ and $X$ commute in $V$. Now by Lemma \ref{abel} $H^{'}$ acts trivially on $V_i$ for any $i$. Thus, $H^{'}$ acts trivially on $K$ by Lemma \ref{stab} and then $H^{'} \leqslant \tau(G)$.

Now, assume that $\Tor(N)\neq 1$. By Proposition \ref{lem.nilpotent}, $N=K \times \Tor(N)$ where $K$ is a torsion-free subgroup. Consider $\bar{G}=G/\Tor(N)$. Clearly the inverse image $H$ of $\bar{H}$ is a torsion subgroup and $G = K \rtimes H$ where $K$ is a torsion-free nilpotent subgroup of $G$. On the other hand, $\Tor(N)$ and $H'$ are contained in $\tau(G)$ and $\tau(G)$ commute with $K$. Therefore  $G' \leqslant K\rtimes H'=K\times H'$ which completes the proof.
\end{proof}

\end{document}